\documentclass[10pt, a4paper]{article}
\usepackage[utf8]{inputenc}
\usepackage{amsfonts}
\usepackage{amsmath}
\usepackage{amsthm}
\usepackage{amssymb}
\usepackage{xcolor}
\usepackage[colorlinks]{hyperref} 
\hypersetup{citecolor=blue}
\usepackage{cleveref}
\usepackage{natbib}
\usepackage{xspace}
\usepackage{comment}

\newcommand{\lsa}{\emph{lsa}\xspace}
\newcommand{\R}{\mathbb R}
\newcommand{\usa}{\emph{usa}\xspace}
\newcommand{\X}{\mathcal X}
\newcommand{\Y}{\mathcal Y}

\newtheorem{lemma}{Lemma}

\newtheorem{theorem}{Theorem}
\newtheorem{corollary}{Corollary}

\newcommand{\reptile}{}
\newtheorem*{repinner}{\reptile}
\newenvironment{refthm}[1]{%
  \renewcommand{\reptile}{Theorem \ref{#1}}%
  \begin{repinner}
}{%
  \end{repinner}
}
\newenvironment{reflem}[1]{%
  \renewcommand{\reptile}{Lemma \ref{#1}}%
  \begin{repinner}
}{%
  \end{repinner}
}

\author{Eugenio Clerico\footnote{Correspondence to eugenio.clerico@gmail.com}}
\date{\small Department of Statistics, University of Oxford}
\title{Borel selection of dominating hyperplanes} 

\begin{document}
\maketitle

\begin{abstract}
\noindent We study a natural measurable selection problem for which the standard uniformisation theorems do not seem to apply directly, yet a Borel selector exists. More precisely, we consider families of finite dimensional functions that admit pointwise domination by affine functionals and ask whether such dominating functionals can be chosen in a Borel measurable way. We prove that this is indeed possible under semi-analytic regularity assumptions. The proof combines a one-dimensional Borel insertion result between an upper and a lower semi-analytic function, derived from Lusin's separation theorem, with an induction on the dimension. As an application, we obtain Borel measurable selections of subgradients for parameter-dependent finite-dimensional convex functions, outside the scope of the standard normal integral framework.
\end{abstract}

\section{Introduction}
A classical problem in descriptive set theory is studying the existence of regular, ideally Borel measurable, selectors for set-valued mappings. Although the classical machinery provides powerful tools for obtaining universally measurable selectors from analytic relations ($\mathbf{\Sigma}^1_1$), the situation becomes remarkably more challenging when analyticity is lost, or when one insists on Borel selectors. The purpose of this paper is to study a natural finite-dimensional  selection problem in which the underlying relation is inherently coanalytic ($\mathbf{\Pi}^1_1$), yet a Borel measurable selector can  be obtained. 

Let $\X$ be a standard Borel space, $\Y\subseteq \mathbb R^n$ a Borel set, and $f:\X\times \Y\to\R$ upper semi-analytic. For each $x\in \X$, consider the set of affine functionals (i.e., hyperplanes) dominating the section $y\mapsto f(x,y)$:
\begin{equation}\label{eq:S(x)}S(x)=\Bigl\{(b,c)\in\R^n\times\R\,:\, f(x,y)\leq b\cdot y+c\,,\; \forall y\in \Y\Bigr\}\,.\end{equation}
A natural question is whether we can associate to each $x$ a dominating hyperplane in a Borel measurable way. Although the coanalytic nature of the graph of the set-valued function $S$ prevents a direct application of much of the classical selection theory, we show that the finite-dimensional linear structure of the problem is strong enough to yield a Borel  selector.

\begin{refthm}{thm:main}[Hyperplane selection]
Let $\X$ be a standard Borel space and $\Y\subseteq\R^n$ a Borel set. Let $f:\X\times \Y\to\R$ be upper semi-analytic. Assume that there exist  $b:\X\to\R^n$ and $c:\X\to\R$ such that $f(x,y)\leq b(x)\cdot y  +c(x)$, for every $x\in\X$ and $y\in \Y$. Then, there exist Borel measurable maps $B:\X\to\R^n$ and $C:\X\to\R$ such that
    $f(x,y)\leq B(x)\cdot y  +C(x)$, for every $x\in\X$ and $y\in\Y$.\\
Equivalently, if $S:\X\mapsto 2^{\R^{n+1}}$, defined in \eqref{eq:S(x)}, is non-empty valued, then it admits a Borel selector.
\end{refthm}

We remark that a natural complement to this result arises when the affine bound is linear (i.e., $c \equiv 0$). In such case, we show that the corresponding Borel affine selector can be chosen to be linear (\Cref{cor:linfunc}).

Although at  first glance \Cref{thm:main} might look like a ``routine'' measurable selection problem, the graph of $S$ (defined through a \emph{universal} quantifier, `$\forall$') is naturally coanalytic, yet our goal is to obtain a strictly Borel selector. This complexity seems to prevent the application of most of the classical selection and uniformisation machinery. First, the relation is not Borel, ruling out standard Borel unicornisation theorems, which typically require additional topological assumptions on the graph's sections such as countability, $\sigma$-compactness, or nonmeagerness (see the introduction of \cite{kechris2025invariant}). Second, a coanalytic relation falls outside the main analytic literature stream, precluding tools built upon the celebrated Jankov-von Neumann theorem for analytic relations (see \cite{srivastava1998course}, Theorem 5.5.2). Third, even within the specialised coanalytic framework, available results fall short: the Novikov-Kond\^o theorem (Theorem 5.14 in \cite{srivastava1998course}) yields only a coanalytic uniformisation, while the Borel selection result of Cenzer-Mauldin (Corollary 3.3 in \cite{burgess1981careful}) strictly requires nonmeager sections. In our setting, this requirement easily fails (e.g., if $f(x,y)=\alpha\cdot y$ for some $\alpha\in\R^n$, then $S(x) = \{\alpha\}\times [0,\infty)$, which is clearly meager in $\R^{n+1}$). Finally, attempting to use the Kuratowski-Ryll-Nardzewski theorem (see, e.g., Theorem 4.1 in \cite{wagner1977survey}) or the Borel selection result from \cite{burgess1981careful} (Theorem 3.1 therein) would require an Effros measurability assumption\footnote{That is, one requires that $\{x\in\X:S(x)\cap U\neq\varnothing\}$ be Borel for every open set $U\subseteq\R^{n+1}$. This is also often called weak Borel measurability of the multifunction.} on the multifunction, which fails here: if $U\subseteq\mathbb R^{n+1}$ is open, then $\{x\in\X\,:\,S(x)\cap U\neq\varnothing\}$ is the projection of a coanalytic relation, and therefore need not be Borel in general.

Rather than relying on classical uniformisation, the proof of Theorem \ref{thm:main} takes a different route, leveraging the following one-dimensional Borel insertion result.
\begin{reflem}{lemma:sandwich}[Analytic sandwich]
Let $\X$ be a standard Borel space, let $u:\X\to\R$ be upper semi-analytic and $l:\X\to\R$ be lower semi-analytic. If $u\leq l$, pointwise on $\X$, then there exists a Borel function $f:\X\to\R$ such that $u\leq f\leq l$, pointwise on $\X$.
\end{reflem}
Despite following easily from Lusin's separation theorem,  Lemma \ref{lemma:sandwich} does not seem to be widely known in this explicit form. To the author's knowledge, it appears only in \cite{clerico2025optimality}, although in a less general formulation, as a technical result in the appendix (Proposition 4). Notably, its conclusion is not a straight consequence of the standard Borel selector machinery, for essentially the same reasons that this machinery does not directly imply Theorem \ref{thm:main}. 

The relevance of the measurable selection problem considered here extends beyond descriptive set theory. For instance, Theorem \ref{thm:main} yields Borel measurable selections of subgradients for suitable parameter-dependent finite dimensional convex functions, while in the  convex-analytic literature, conclusions of this kind are often obtained under stronger regularity assumptions, such as normal-integrand type hypotheses or direct measurability properties of the associated multifunctions (\cite{rockafellar1969measurable,hess1995measurability}). Related domination and selection problems also appear in mathematical finance in connection with superhedging constructions (\cite{bouchard2015arbitrage}), and sequential hypothesis testing when characterising minimal complete classes of tests (\cite{clerico2025optimality}).
\section{Preliminaries}

Throughout this paper, we assume that $\X$ is a standard Borel space, namely a measurable space isomorphic to a Borel subset of a Polish space equipped with its relative Borel $\sigma$-algebra. 

Given a measurable space $\Y$, a \emph{multifunction} (or set-valued map) $S : \X \to 2^\Y$ is a mapping that assigns to each $x \in \X$ a subset $S(x) \subseteq \Y$. The \emph{graph} of $S$ is the relation defined by $\operatorname{Gr}(S) = \{(x, y) \in \X \times \Y : y \in S(x)\}$. If $S$ is non-empty valued (meaning that $S(x) \neq \varnothing$ for every $x \in \X$), we define a \emph{selector} for $S$ as a function $s : \X \to \Y$ such that $s(x) \in S(x)$ for each $x \in \X$. The problem of finding a measurable selector for $S$ is closely related to the \emph{uniformisation} problem for relations. Given a relation $R \subseteq \X \times \Y$, a uniformisation of $R$ is a subset $r \subseteq R$ with single-valued sections. This means that $r$ is the graph of a function $f : \operatorname{proj}_{\X}(R) \to \Y$ such that $(x, f(x)) \in R$ for every $x \in \operatorname{proj}_{\X}(R)$, the projection of $R$ onto $\X$. 

A subset $A \subseteq \X$ is called \emph{analytic}, denoted $A \in \mathbf{\Sigma}^1_1$, if there exist a standard Borel space $\mathcal{Z}$ and a Borel set $B \subseteq \X \times \mathcal{Z}$ such that $A$ is the projection of $B$ onto $\X$. A subset $C \subseteq \X$ is \emph{coanalytic}, denoted $C \in \mathbf{\Pi}^1_1$, if its complement $\X \setminus C$ is analytic. A fundamental result, known as Souslin's theorem, states that a set is Borel if, and only if, it is both analytic and coanalytic. This equivalence is a direct consequence of Lusin's separation theorem (see, e.g., Theorem 14.7 in \cite{kechris1995classical}), which states that if an analytic set $A \subseteq \X$ is contained in a coanalytic set $C \subseteq \X$, then there exists a Borel set $B \subseteq \X$ such that $A \subseteq B \subseteq C$.

These set properties allow us to introduce specific function regularity classes. Let $I\subseteq[-\infty,\infty]$. A function $f : \X \to I$ is defined as \emph{upper semi-analytic} (\usa) whenever the set $\{x \in \X : f(x) > \gamma\}$ is analytic for every $\gamma \in \mathbb{R}$. By symmetry, $f$ is \emph{lower semi-analytic} (\lsa) if $\{x \in \X : f(x) < \gamma\}$ is analytic for all $\gamma \in \mathbb{R}$. Since  $\mathbf{\Sigma}^1_1$ is closed under countable unions and intersections, one can equivalently rewrite the above definitions using non-strict inequalities. Clearly, Borel functions are both \usa and \lsa, and the converse is also true: if a function is simultaneously \usa and \lsa, then it is Borel. A useful property for these semi-analytic regularity classes is their stability  under optimisation over analytic domains. Let $\X$ and $\mathcal{Z}$ be standard Borel spaces, and $A \subseteq \mathcal{Z}$ an analytic set. If $u: \X \times \mathcal{Z} \to I$ is \usa, then $x\mapsto \sup_{z \in A} u(x, z)$ is \usa. Conversely, if $l: \X \times \mathcal{Z} \to I$ is \lsa, then $x\mapsto \inf_{z \in A} l(x, z)$ is \lsa.

For further details about measurable selection and uniformisation, see Chapter 5 of \cite{srivastava1998course}. Background on the basic properties of analytic and coanalytic sets can be found in Chapter 14 of \cite{kechris1995classical}, while Section 7.7 of \cite{bertsekas1978stochastic} gives an introduction to semi-analytic functions.

\section{Analytic sandwich lemma}
Souslin's theorem, which states that a set is Borel if, and only if, it is both analytic and coanalytic, can be seen as a direct consequence of Lusin's separation theorem. In a completely analogous way, the fact that a function is Borel if, and only if, it is both \usa and \lsa may be seen as a special case of the following interpolation result, which extends Proposition 4 from \cite{clerico2025optimality}. Indeed, applying the analytic sandwich lemma below with $u=l=f$ shows that any function which is simultaneously \usa\ and \lsa\ must be Borel.

\begin{lemma}[Analytic sandwich]\label{lemma:sandwich}
Let $\X$ be a standard Borel space, let $u:\X\to\R$ be \usa and $l:\X\to\R$ be \lsa. If $u\leq l$, pointwise on $\X$, then there exists a Borel function $f:\X\to\R$ such that $u\leq f\leq l$, pointwise on $\X$.
\end{lemma}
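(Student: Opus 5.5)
The plan is to reduce the functional statement to countably many applications of Lusin's separation theorem, one for each rational threshold, and then assemble the Borel function from the resulting Borel sets.

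\textbf{Step 1: separate at each rational level.} For each $q\in\mathbb Q$ consider
\[
A_q=\{x\in\X: u(x)>q\},\qquad C_q=\{x\in\X: l(x)\geq q\}.
\]
Since $u$ is \usa, $A_q$ is analytic. Since $l$ is \lsa, $\{l<q\}$ is analytic, so $C_q$ is coanalytic. Because $u\leq l$, we have $A_q\subseteq C_q$. Lusin's separation theorem then gives a Borel set $B_q$ with $A_q\subseteq B_q\subseteq C_q$.

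\textbf{Step 2: make the family monotone.} I want the sets to decrease in $q$. To arrange this, set $D_q=\bigcup_{r\in\mathbb Q,\, r\geq q} B_r$, which is Borel as a countable union. Then $D_q$ is decreasing in $q$ and contains $B_q\supseteq A_q$. It also satisfies $D_q\subseteq\bigcup_{r\geq q}C_r\subseteq C_q$, since the sets $C_r$ are decreasing in $r$.

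\textbf{Step 3: define the candidate.} Put $f(x)=\sup\{q\in\mathbb Q: x\in D_q\}$. This takes values in $[-\infty,\infty]$, and the first check is that it is real-valued.
\begin{itemize}
\item Since $u(x)\in\R$, we have $x\in A_q\subseteq D_q$ for every rational $q<u(x)$. Hence $f(x)\geq u(x)>-\infty$.
\item If $x\in D_q$ then $x\in C_q$, so $q\leq l(x)$. Hence $f(x)\leq l(x)<\infty$.
\end{itemize}
These two bullets also give the sandwich $u\leq f\leq l$ directly.

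\textbf{Step 4: Borel measurability.} For $\gamma\in\R$, monotonicity of $(D_q)$ gives
\[
\{f>\gamma\}=\bigcup_{q\in\mathbb Q,\,q>\gamma} D_q.
\]
The inclusion $\supseteq$ holds because $x\in D_q$ with $q>\gamma$ forces $f(x)\geq q>\gamma$. The inclusion $\subseteq$ holds because $f(x)>\gamma$ means some rational $q>\gamma$ has $x\in D_q$, by definition of the supremum. The right-hand side is a countable union of Borel sets, so $f$ is Borel.

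\textbf{Main obstacle.} The only delicate point is choosing strict versus non-strict inequalities in $A_q$ and $C_q$. They must make $A_q\subseteq C_q$ hold while keeping $A_q$ analytic and $C_q$ coanalytic. The monotonisation in Step 2 is what makes the supremum formula in Step 4 valid.
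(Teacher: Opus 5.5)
Your proof is correct, and it takes a more direct route than the paper's. The paper builds the result in stages:
\begin{itemize}
\item indicator functions, by one application of Lusin's separation theorem;
\item simple functions, by separating at each of finitely many levels and then monotonising via $B'_k=\bigcup_{i\ge k}B_i$, which is the finite analogue of your Step 2;
\item bounded functions, by approximating $u$ from below and $l$ from above with semi-analytic simple functions and taking $\limsup_n f_n$;
\item unbounded functions, by truncation and another $\limsup$.
\end{itemize}
You instead perform countably many separations at once, one at every rational level, and read off $f$ as a supremum. This removes the need for any approximation or limiting argument. Unbounded functions need no separate treatment. The same construction also works verbatim for $[-\infty,\infty]$-valued $u$ and $l$, which is the extension the paper mentions after its proof. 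The paper's layered presentation mainly makes the parallel with Souslin's theorem visible; your argument is the leaner one.

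One small correction to your commentary: the monotonisation in Step 2 is not needed. Defining $f(x)=\sup\{q\in\mathbb{Q}: x\in B_q\}$ directly gives exactly the same function. The bounds $u\le f\le l$ then follow as in your Step 3. The identity $\{f>\gamma\}=\bigcup_{q\in\mathbb{Q},\,q>\gamma}B_q$ holds for any family of sets, because a supremum exceeds $\gamma$ exactly when some member of the set does. So Step 2 is harmless but can be dropped, and it is not what makes Step 4 valid. Likewise, the strict-versus-non-strict choice in $A_q$ and $C_q$ is not delicate. Both kinds of level set are analytic or coanalytic as appropriate, so $A_q=\{u\ge q\}$ would work just as well.
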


\begin{proof}
First, note that if $u$ and $l$ are indicator functions, then the result follows directly from Lusin's separation theorem. Indeed, say that $u = \mathbf{1}_A$ and $l = \mathbf{1}_C$. Since $u$ is \usa,  $A = \{u>1/2\}$ is analytic. On the other hand, $C = \{ l > 1/2 \}$ is coanalytic as $l$ is \lsa. Asking that $u\leq l$ pointwise implies that $A\subseteq C$. By Lusin's separation theorem, there is a Borel $B$ such that $A\subseteq B\subseteq C$. Therefore, $f = \mathbf{1}_B$ is Borel measurable and satisfies $u\leq f\leq l$.

Next, let us consider the case of simple functions, namely we assume that $u$ and $l$ are valued on a finite set $\mathcal Y = \{y_1,\dots,y_N\}$. We assume that $\mathcal Y$ is ordered increasingly, namely $y_{i+1}>y_i$ for all $i$. Let $A_i = \{u\geq y_i\}$, which is analytic, and $C_i = \{l\geq y_i\}$, which is coanalytic. Because $u\leq l$ pointwise, $A_i\subseteq C_i$. By Lusin's separation theorem there is a Borel set $B_i$ such that $A_i\subseteq B_i\subseteq C_i$. For $k=1,\dots N$, let $B'_k = \bigcup_{i=k}^N B_i$. Note that we have $A_k \subseteq B_k\subseteq B'_k\subseteq  C_k$, since $B_i\subseteq C_i\subseteq C_k$ for every $i\geq k$. Now, let $D_N = B'_N$, and then define iteratively $D_i = B'_i\setminus B'_{i+1}$ (note that by construction $B'_{i+1}\subseteq B'_i$ for all $i$ and $B_1=\X$). The sets $D_i$ are Borel, so $f = \sum_{i=1}^N y_i\mathbf{1}_{D_i}$ is Borel measurable. By construction, we also have that $\{f\geq y_i\}=B'_i$, which shows that $u\leq f\leq l$ pointwise.

Now, we can focus on the case where both $u$ and $l$ are bounded functions. By rescaling and translating if necessary, we can assume without loss of generality that they both take values in $[0,1]$. First, let us show that there is a non-decreasing sequence $(u_n)_{n\geq 1}$ of \usa simple functions that converge pointwise to $u$. This can be easily constructed. For each $n$, for $t=0,\dots,2^n$, let $A_t^n = \{u\geq t/2^n\}$. Now define $u_n$ as follows. On $A_{2^n}^n$ we set $u_n$ equal to $1$. For $t = 0,\dots,2^n-1$, for $x\in A_t^n\setminus A_{t+1}^n$, set $u_n(x) = t/2^n$. Since $A_0^n = \mathcal X$ and each $A_t^n\supseteq A_{t+1}^n$, $u_n$ is well defined on the whole $\mathcal X$. It is a quick check that each $u_n$ is simple and \usa, and that the sequence $(u_n)_{n\geq 1}$ is non-decreasing and converges pointwise to $u$. With an analogous construction, we can obtain a non-increasing sequence $(l_n)_{n\geq1}$ of \lsa simple functions, converging pointwise to $l$. From what we derived earlier, for each $n$ there is a Borel function $f_n$, such that $u_n\leq f_n\leq l_n$ pointwise. Letting $f=\limsup_{n\to\infty} f_n$, we get that $u\leq f\leq l$ pointwise, and $f$ is Borel measurable.

Finally, let us consider the general case, where $u$ and $l$ are potentially unbounded.  For each integer $n\geq 1$, define $u_n = \max\{-n, \min\{u,n\}\}$ and $l_n = \max\{-n, \min\{l,n\}\}$. Then each $u_n$ is bounded and \usa, and each $l_n$ is bounded and \lsa. Moreover, $u_n\leq l_n$ pointwise. By what we have already shown, we know that there exists $f_n$ Borel such that $u_n\leq f_n\leq l_n$. Clearly, $u_n\to u$ and $l_n\to l$ for $n\to\infty$. We can now define $f = \limsup_{n\to\infty} f_n$. This is a Borel function, and satisfies  $u\leq f\leq l$ pointwise.
\end{proof}
We note that the result readily extends, with the very same proof, to $u$ and $l$ taking values in $[-\infty, \infty]$, although in such a case we also need to allow $f$ to take infinite values.

For dealing with the base case ($n=0$) in \Cref{thm:main}, we will  need one further preliminary lemma, which can be derived via a generalisation of Lusin's separation theorem due to Novikov.
\begin{lemma}\label{lemma:novikov}
Let $\X$ be a standard Borel space, and let $u:\X\to\R$ be \usa. Then, there exists a Borel map $f:\X\to\R$ such that $f\geq u$ pointwise on $\X$.
\end{lemma}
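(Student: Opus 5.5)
The plan is to prove the statement with a countable covering argument that uses the sets $A_k=\{x\in\X: u(x)<k\}$ for $k\in\mathbb N$. These sets are coanalytic, since $\{u\geq k\}$ is analytic because $u$ is \usa. They increase and cover $\X$, because $u$ is real valued. The difficulty is that the $A_k$ need not be Borel, so the naive choice $f(x)=\min\{k: x\in A_k\}$ is not obviously Borel. The remedy is to shrink each $A_k$ to a Borel set $B_k\subseteq A_k$ in such a way that the $B_k$ still cover $\X$.

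This is exactly what Novikov's generalisation of Lusin's separation theorem provides. If $(C_k)_{k\geq 1}$ is a sequence of coanalytic sets with $\bigcup_k C_k=\X$ (equivalently, $(\X\setminus C_k)_k$ are analytic sets with empty intersection), then there exist Borel sets $B_k\subseteq C_k$ with $\bigcup_k B_k=\X$. I will invoke this in the form stated in Kechris (Theorem 28.5 / the reduction/separation for sequences, e.g.\ the ``Novikov separation theorem'' 28.5 of \cite{kechris1995classical}). Applying it with $C_k=A_k$ gives Borel sets $B_k\subseteq\{u<k\}$ covering $\X$.

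Given these sets, define $f(x)=\min\{k\in\mathbb N: x\in B_k\}$. This is well defined and finite because the $B_k$ cover $\X$. It is Borel because $\{f\leq m\}=\bigcup_{k\leq m}B_k$ for every $m$. It satisfies $f\geq u$ because $x\in B_{f(x)}\subseteq A_{f(x)}$, so $u(x)<f(x)$.

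The main step needing care is the correct invocation of Novikov's theorem. Lusin's separation theorem alone handles only two sets and would not produce a countable Borel cover. If a reference in exactly the covering form is unavailable, I would derive it from the version for analytic sets: a sequence of analytic sets $E_k=\X\setminus A_k$ with $\bigcap_k E_k=\varnothing$ admits Borel sets $D_k\supseteq E_k$ with $\bigcap_k D_k=\varnothing$. Taking complements $B_k=\X\setminus D_k$ then gives the required Borel sets. Finiteness of $u$ is essential here, since it is what makes $\bigcap_k E_k$ empty.
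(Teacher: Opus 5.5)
Your proposal is correct and follows essentially the same route as the paper: apply Novikov's generalised separation theorem (Kechris, Theorem 28.5) to the increasing coanalytic cover of the space by sublevel sets of $u$, then take $f(x)=\min\{k : x\in B_k\}$. The only difference is cosmetic. You use strict sublevel sets $\{u<k\}$, whereas the paper uses $\{u\le n\}$; both are coanalytic because $u$ is \usa.
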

\begin{proof}
For each integer $n \geq 1$, we define $A_n = \{x \in \X : u(x) \leq n\}$. Because $u$ is \usa,  $A_n$ is  coanalytic. Furthermore, since $u$ takes values in $\R$, $\bigcup_{n=1}^\infty A_n = \X$. Therefore, Novikov's generalised separation theorem (\cite{kechris1995classical}, Theorem 28.5) implies the existence of a sequence of Borel sets $B_n$ such that $B_n \subseteq A_n$ for all $n$, and $\bigcup_{n=1}^\infty B_n = \X$. We  define $f: \X \to \R$ by $f(x) = \min \{n \geq 1 : x \in B_n\}$.
Since the sets $B_n$ cover $\X$, $f$ is well-defined and finite everywhere. Moreover, $f$ is  Borel measurable and dominates $u$.
\end{proof}

As a side remark, with some more work but following a similar reasoning one can obtain a stronger version of \Cref{lemma:novikov}, which is again in the form of a sandwich lemma: if $u$ \usa is valued in $[-\infty, \infty)$ and $l$ \lsa in $(-\infty, \infty]$, then there exists a real-valued Borel interpolator $f: \X \to \R$, assuming $u \leq l$.

\section{Hyperplane selection}
\begin{theorem}[Hyperplane selection]\label{thm:main}
    Let $\X$ be a standard Borel space and $\Y\subseteq\R^n$ a Borel set. Let $f:\X\times \Y\to\R$ be \usa. Assume that there exist maps $b:\X\to\R^n$ and $c:\X\to\R$ such that $f(x,y)\leq b(x)\cdot y  +c(x)$, for every $x\in\X$ and $y\in \Y$. Then, there exist Borel measurable maps $B:\X\to\R^n$ and $C:\X\to\R$ such that
    $f(x,y)\leq B(x)\cdot y  +C(x)$, for every $x\in\X$ and $y\in\Y$.
\end{theorem}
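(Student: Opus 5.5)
The plan is an induction on the dimension $n$, after a reduction that separates the choice of slope from the choice of intercept. \emph{Reduction.} Suppose we already have a Borel map $B:\X\to\R^n$ such that, for every $x$, $\sup_{y\in\Y}\bigl(f(x,y)-B(x)\cdot y\bigr)<\infty$. The map $(x,y)\mapsto f(x,y)-B(x)\cdot y$ is \usa. The supremum over the Borel (hence analytic) set $\Y$ preserves upper semi-analyticity, so $u(x)=\sup_{y\in\Y}(f(x,y)-B(x)\cdot y)$ is a real-valued \usa function. Lemma \ref{lemma:novikov} then gives a Borel $C\geq u$, which finishes the proof. In particular the base case $n=0$ ($\Y$ a point, or empty) is exactly Lemma \ref{lemma:novikov}. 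So everything reduces to selecting, in a Borel way, a point of the convex set
\begin{equation*}
K(x)=\Bigl\{\beta\in\R^n\,:\,\sup_{y\in\Y}\bigl(f(x,y)-\beta\cdot y\bigr)<\infty\Bigr\}\,,
\end{equation*}
which is non-empty because $b(x)\in K(x)$.

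\emph{Inductive step.} Write $y=(t,z)$ with $t\in\R$ and $z\in\R^{n-1}$. The first step is to choose the first slope coordinate $B_1(x)$ Borel so that it lies in the interval $I(x)=\{\beta_1: \exists\beta'\in\R^{n-1},\ (\beta_1,\beta')\in K(x)\}$, the projection of $K(x)$ onto the first coordinate. Once this is done, set
\begin{equation*}
g(x,z)=\sup\{f(x,t,z)-B_1(x)t\,:\,(t,z)\in\Y\}\,.
\end{equation*}
This function is \usa because it is a supremum of a \usa function over a Borel section. It is finite on the projection $\Y'$ of $\Y$ and dominated there by an affine function of $z$, since $B_1(x)\in I(x)$. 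To stay within the hypotheses, which require a Borel domain and a real-valued function, I would replace $\Y'$ by $\R^{n-1}$ and set $g$ equal to a suitable finite value off $\Y'$. One option is to take a minimum with an \lsa-compatible cap; alternatively one can check that the proofs of the lemmas tolerate the value $-\infty$, as the remark after Lemma \ref{lemma:sandwich} indicates. The induction hypothesis in dimension $n-1$ then yields Borel $B'$ and $C'$ with $g(x,z)\leq B'(x)\cdot z+C'(x)$. Hence $f(x,t,z)\leq B_1(x)t+B'(x)\cdot z+C'(x)$ on $\Y$, and $C$ can be taken to be $C'$ directly.

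\emph{Main obstacle: choosing $B_1$.} I expect the selection of $B_1$ to be the hard step. I would try to obtain it from Lemma \ref{lemma:sandwich} applied to the endpoints of $I(x)$, together with a case split. If $I(x)$ is a single point, that point is forced and equals $b_1(x)$. If $I(x)$ has non-empty interior, any Borel function lying strictly between the endpoints works. The plan is to express the lower endpoint $\inf I(x)$ as a \usa quantity and the upper endpoint $\sup I(x)$ as an \lsa quantity. For this I would use one-dimensional "asymptotic slope" expressions: suprema over pairs of points of $\Y$ of difference quotients of $f$ along the $t$-direction, relative to the remaining coordinates. I would then sandwich a Borel function between them and take a Borel midpoint-type combination, so that the chosen value avoids the possibly non-attained endpoints.

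The delicate points are twofold. First, $I(x)$ is defined with an existential quantifier over $\beta'$ on top of a universal one, so an honest \usa or \lsa description of its endpoints needs the finite-dimensional convex structure, for instance duality and recession-cone arguments, rather than brute quantifier counting. Second, one must handle the non-attainment of endpoints and the degenerate case uniformly in $x$. If the direct endpoint description fails, my fallback is to first reduce to the case where the recession directions of $K(x)$ behave well. A concrete way to do this is to restrict to the affine hull of $\Y$ or to change coordinates so that $K(x)$ is full-dimensional in the relevant directions; after that the endpoints become tractable and the sandwich argument applies.
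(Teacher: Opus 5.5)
\paragraph{Verdict: genuine gap.} Your reduction to slopes is correct: once you have a Borel $B$ with $\sup_y(f(x,y)-B(x)\cdot y)<\infty$, Lemma~\ref{lemma:novikov} supplies $C$. The second half of your inductive step is also correct: given a Borel $B_1(x)\in I(x)$, you pass to $g(x,z)=\sup_t(f(x,t,z)-B_1(x)t)$ and invoke the induction hypothesis. The domain issue disappears if you first reduce to $\Y=\R^n$ by extending $f$ with $-\|y\|^2$, as the paper does; note that a minimum with a cap cannot turn $-\infty$ into a finite value.

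But this only shows that the theorem is \emph{equivalent} to Borel-selecting $B_1(x)\in I(x)$, and that step, which you flag yourself as the main obstacle, is never carried out.

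\paragraph{Where the plan for $B_1$ fails.} Endpoint regularity is the harmless part. For $\Y=\R^n$, the concave hull of $f(x,\cdot)$ is a finite concave function, \usa in $(x,y)$ as a Carath\'eodory-type supremum. Then $\inf I(x)$ is its asymptotic slope along $e_1$, a $\limsup$ of \usa functions, hence \usa; symmetrically $\sup I(x)$ is \lsa.

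The real obstruction is attainment. $I(x)$ is a projection of the closed convex set $S(x)$ in which the intercept is eliminated, so it need not be closed. For $n=1$, $f(y)=\sqrt y$ on $y\ge 0$ and $f(y)=2y$ on $y<0$ give $K=(0,2]$. Lemma~\ref{lemma:sandwich} may therefore return a non-attained endpoint.

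No ``midpoint-type'' correction using only the regularity of the endpoints can repair this. Take $A\subseteq\X$ analytic and not Borel, $a\equiv 0$ (\usa) and $b=\mathbf{1}_{\X\setminus A}$ (\lsa). A Borel $m$ with $m=a$ where $a=b$ and $a<m<b$ elsewhere would make $A=\{m=0\}$ Borel.

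Your case split (singleton versus nonempty interior) runs into exactly this. The set $\{\inf I<\sup I\}$ is in general only coanalytic: $f(x,y)=\min(0,y)$ for $x\notin A$ and $f(x,y)=0$ for $x\in A$ is \usa and makes it equal to $\X\setminus A$. For the same reason, your fallback of an $x$-dependent change of coordinates making $K(x)$ full-dimensional is a selection problem of the same kind, since $\dim K(x)$ can jump on an analytic non-Borel set. You do not construct it.

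\paragraph{Missing idea.} Keep the intercept and eliminate a \emph{slope} coordinate instead, Fourier--Motzkin style, so that the final choice is made in a fibre of $S(x)$ rather than in a projection. The paper builds on $\{y_n=0\}$ the \usa function $F=\max(f,h)$. Here $h(x,\tilde y)$ is the supremum of $\frac{y_nf(x,y')-y'_nf(x,y)}{y_n-y'_n}$ over pairs with $y_n>0>y'_n$ whose segment meets $\{y_n=0\}$ at $\tilde y$.

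$F$ is affinely dominated in dimension $n-1$, so induction yields Borel $B_1,\dots,B_{n-1}$ \emph{and} $C$. With these fixed, the admissible $B_n$ form the closed interval $[U(x),L(x)]$, with $U$ \usa, $L$ \lsa, and $U\le L$ inherited from the domination of $F$. Lemma~\ref{lemma:sandwich} then applies with no attainment issue. The non-closed elimination of the intercept occurs only at $n=0$, where Lemma~\ref{lemma:novikov} handles it.
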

\begin{proof}
    Without loss of generality, we may assume $\Y = \mathbb R^n$. Indeed, if $\Y \subsetneq \mathbb R^n$, we extend $f$ to $\tilde{f}: \X \times \mathbb R^n \to \R$ by setting $\tilde{f}(x,y) = f(x,y)$ if $y \in \Y$ and $\tilde{f}(x,y) = -\|y\|^2$ (where $\|\cdot\|$ is the Euclidean norm) if $y \notin \Y$. Since $\Y$ is a Borel set, $\tilde{f}$ is \usa. We now verify that $\tilde{f}$ satisfies the affine domination hypothesis. For $y \in \Y$, we have $\tilde{f}(x,y) = f(x,y) \leq b(x)\cdot y + c(x)$ by assumption. On the other hand, for $y \notin \Y$, we use $-\|y\|^2 \leq \|y + b(x)/2\|^2 - \|y\|^2 =  b(x)\cdot y + \|b(x)\|^2/4$. Thus, $\tilde{f}(x,y) \le b(x)\cdot y + \tilde{c}(x)$, for all $x\in\X$ and $y\in\R^n$, where $\tilde{c}(x) = \max(c(x), \|b(x)\|^2/4)$. Therefore, we now proceed assuming $\Y = \mathbb R^n$.

    We prove the result by induction on the dimension. If $n=0$, then $\R^0 = \{0\}$ and the hypothesis  reduces to $f(x,0)\leq c(x)$ for every $x\in\X$. Since $f$ is \usa, the section $x\mapsto f(x,0)$ is also a \usa map from $\X$ to $\R$. By \Cref{lemma:novikov}, there is a real-valued $C:\X\to\R$ such that  $f(x,0)\leq C(x)$ for all $x\in\X$. 

    Now, let $n\geq 1$ and assume that the theorem holds up to $n-1$. We let $f:\X\times \R^n\to\R$  satisfy the affine domination hypothesis with $b$ and $c$.  
    We partition $\R^n$ into $\R^n_> = \{y\in \R^n\,:\,y_n>0\}$, $\R^n_<=\{y\in \R^n\,:\,y_n<0\}$, and $\R^n_0 = \{y\in \R^n\,:\,y_n=0\}$. Define the set 
    $$\Gamma = \left\{(\tilde y, y, y')\in \R^n_0\times \R^n_>\times \R^n_<\,:\,\frac{y_{n} y' - y'_{n} y}{y_{n}-y'_{n}} = \tilde y\right\}\,.$$ This is a Borel set, and so the function $g:\X\times \R^n_0\times \R^n_>\times \R^n_<\to[-\infty,\infty)$, defined as 
    $$g(x,\tilde y, y, y') = \begin{cases}
    \frac{y_{n}f(x,y') - y'_{n}f(x,y)}{y_{n}-y'_{n}} & \text{if $(\tilde y, y, y')\in\Gamma$};\\-\infty & \text{otherwise,}\end{cases}$$
    is \usa (note that $y_{n}>0$ and $y'_{n}<0$). For any $(x, \tilde y, y, y')\in \X\times \R^n_0\times \R^n_>\times \R^n_<$,$$g(x, \tilde y, y, y') \leq \frac{y_{n}  \big(b(x)\cdot y'+c(x)\big) - y'_{n} \big(b(x)\cdot y+c(x)\big)}{y_{n}-y'_{n}} =   b(x) \cdot\tilde y + c(x)\,,$$  as the left side is finite only if $(\tilde y, y,y')\in\Gamma$. Since the upper-bound on $g$ is always finite, we can define $h:\X\times \R^n_0\to[-\infty,\infty)$ as  
    $$h(x,\tilde y) = \sup_{y\in \R^n_>,y'\in \R^n_<} g(x, \tilde y, y, y')\,,$$ 
    which is \usa, as the supremum of a \usa function over an analytic set, and satisfies $h(x,\tilde y)\leq b(x)\cdot \tilde y + c(x)$, for all $(x,\tilde y)\in\X\times \R^n_0$.

    We can now define $F:\X\times \R^n_0\to \R$ as
    $$F(x, \tilde y) = \max\big(h(x,\tilde y),f(x,\tilde y)\big)\,.$$ $F$ is the maximum between two \usa functions, hence it is \usa. Moreover, since both $h$ and $f$ are dominated by $(x,\tilde y)\mapsto b(x)\cdot\tilde y + c(x)$, we have that
    $$F(x,\tilde y) \leq b(x)\cdot \tilde y + c(x) = \sum_{i=1}^{n-1} b_i(x)\tilde y_i + c(x)\,,$$ for every $\tilde y\in \R^n_0$ and $x\in\X$, where we used that $\tilde y_{n} = 0$ as $\tilde y\in \R^n_0$. By the inductive hypothesis, since  $\R^n_0$ can be canonically identified with  $\R^{n-1}$, we have that there exist Borel functions (from $\X$ to $\R$) $B_1,\dots, B_{n-1}$, and $C$, such that
    $$F(x, \tilde y) \leq \sum_{i=1}^{n-1} B_i(x)\tilde y_i + C(x)\,,$$
    for every $x\in\X$ and $\tilde y\in \R^n_0$. 

    Fix $y\in \R^n_>$ and $y'\in \R^n_<$, and let $\tilde y = \frac{y_{n}y' - y_{n}'y}{y_{n}-y'_{n}}$. Clearly, $\tilde y\in \R^n_0$ and $(\tilde y, y,y')\in\Gamma$. In particular,
    $$\frac{y_{n}f(x,y') - y'_{n}f(x,y)}{y_{n}-y'_{n}}= g(x, \tilde y, y, y')\leq F(x,\tilde y) \leq \sum_{i=1}^{n-1}B_i(x)\frac{y_{n}y_i' - y_{n}'y_i}{y_{n}-y'_{n}} + C(x)\,.$$
    Rearranging (and using that $y_{n}-y'_{n}>0$), we get
    $$\frac{f(x,y)-\sum_{i=1}^{n-1} B_i(x)y_i -C(x)}{y_{n}}\leq \frac{f(x,y')-\sum_{i=1}^{n-1} B_i(x)y'_i-C(x)}{y'_{n}}\,.$$
    This holds for every $x\in\X$, $y\in \R^n_>$, and $y'\in \R^n_<$. We now define the functions $U$ and $L$, from $\X$ to $[-\infty,\infty]$, as
    \begin{align*}U(x) &= \sup_{y\in \R^n_>}\frac{f(x,y)-\sum_{i=1}^{n-1} B_i(x)y_i -C(x)}{y_{n}}\,;\\ L(x) &= \inf_{y'\in \R^n_<}\frac{f(x,y')-\sum_{i=1}^{n-1} B_i(x)y'_i-C(x)}{y'_{n}}\,.\end{align*} Since $\R^n_<$ and $\R^n_>$ are non-empty, $L(x)<\infty$ and $U(x)>-\infty$ for any $x\in\X$. As we have $U\leq L$ pointwise, we can view $U$ and $L$ as functions from $\X$ taking values in $\R$. Note that $U$ is \usa and $L$ is \lsa, since they are respectively the supremum and the infimum on analytic domains of \usa and \lsa functions. In particular, the analytic sandwich lemma (\Cref{lemma:sandwich}) applies and we can find a Borel map $B_n:\X\to\R$ such that $U(x)\leq B_n(x)\leq L(x)$, for all $x\in\X$. 
    
    We now define $B:\X\to\R^n$ as the Borel map with components $B_1,\dots, B_n$. We have  obtained that, for any $x\in\X$ and $y\in \R^n_>\cup \R^n_<$, 
    $$f(x,y) \leq B(x)\cdot y + C(x)\,.$$ We are only left to check the case $y\in \R^n_0$. However, in such case we simply have that
    $$f(x,y)\leq F(x,y)\leq \sum_{i=1}^{n-1} B_i(x) y_i +C(x) = B(x)\cdot y + C(x)\,,$$ since $y_{n}=0$.
\end{proof}
It is worth remarking that the Borel assumption on the domain $\Y$ in Theorem \ref{thm:main} can be relaxed to mere analyticity, provided we slightly extend the definition of upper semi-analyticity beyond standard Borel spaces. Indeed, if $\Y \subseteq \mathbb R^n$ is merely an analytic set, the product space $\X \times \Y$ is generally not standard Borel. In this context, a function $f: \X \times \Y \to [-\infty, \infty)$ is said to be \usa if, for every $\gamma \in \mathbb R$, the superlevel set $\{(x,y) \in \X \times \Y : f(x,y) > \gamma\}$ is analytic in the standard Borel space $\X \times \mathbb R^n$. Under this extended definition, our Borel selection result holds for arbitrary analytic domains $\Y$.

\begin{corollary}\label{cor:analytic}
Let $\X$ be a standard Borel space and $\Y\subseteq\R^n$ an analytic set. Let $f:\X\times \Y\to\R$ be \usa in the extended sense described above. Assume there exist maps $b:\X\to\R^n$ and $c:\X\to\R$ such that $f(x,y)\leq b(x)\cdot y + c(x)$ for all $x\in\X$ and $y\in\Y$. Then, there exist Borel measurable maps $B:\X\to\R^n$ and $C:\X\to\R$ satisfying $f(x,y)\leq B(x)\cdot y + C(x)$ for all $x\in\X$ and $y\in\Y$.
\end{corollary}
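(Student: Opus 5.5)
The plan is to reduce Corollary \ref{cor:analytic} to Theorem \ref{thm:main} by extending $f$ to all of $\R^n$, exactly as in the first step of the proof of the theorem. The only difference is that $\Y$ is now analytic rather than Borel. Define $\tilde f:\X\times\R^n\to\R$ by $\tilde f(x,y)=f(x,y)$ if $y\in\Y$ and $\tilde f(x,y)=-\|y\|^2$ otherwise. The domination check carries over verbatim: on $\Y$ we use the hypothesis, and off $\Y$ we use $-\|y\|^2\le b(x)\cdot y+\|b(x)\|^2/4$. Hence $\tilde f(x,y)\le b(x)\cdot y+\tilde c(x)$ with $\tilde c=\max(c,\|b\|^2/4)$.

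The key step is to verify that $\tilde f$ is \usa on the standard Borel space $\X\times\R^n$. For $\gamma\in\R$, the superlevel set splits as
\begin{equation*}
\{\tilde f>\gamma\}=\{(x,y)\in\X\times\Y: f(x,y)>\gamma\}\cup\bigl(\X\times(\R^n\setminus\Y)\cap\{(x,y): -\|y\|^2>\gamma\}\bigr).
\end{equation*}
The first piece is analytic in $\X\times\R^n$ by the extended definition. The second piece is the problem: $\R^n\setminus\Y$ is only coanalytic, so this set need not be analytic. This is the main obstacle, and the plan is to avoid it by not extending to the complement of $\Y$.

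To do so, I would use Lusin's separation theorem in a different way, or more simply choose a value off $\Y$ that makes the second piece harmless. The cleanest choice is to set $\tilde f(x,y)=-\infty$ for $y\notin\Y$. Then for every real $\gamma$ we get $\{\tilde f>\gamma\}=\{(x,y)\in\X\times\Y:f(x,y)>\gamma\}$, which is analytic, so $\tilde f:\X\times\R^n\to[-\infty,\infty)$ is \usa.

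It remains to check that Theorem \ref{thm:main} tolerates the value $-\infty$. Its proof already manipulates $[-\infty,\infty)$-valued \usa functions ($g$, $h$), so I would rerun it with $f$ allowed to take the value $-\infty$, checking three points:
\begin{itemize}
\item The base case uses Lemma \ref{lemma:novikov}, whose proof only uses the sets $\{u\le n\}$ covering $\X$. This still holds for $u$ valued in $[-\infty,\infty)$.
\item In the inductive step, $F=\max(h,f)$ may equal $-\infty$, which is harmless for the inductive hypothesis.
\item In $g$, the expression $\frac{y_nf(x,y')-y'_nf(x,y)}{y_n-y'_n}$ has positive coefficients $y_n$ and $-y'_n$, so it is a well-defined element of $[-\infty,\infty)$ with no $\infty-\infty$ issue.
\end{itemize}

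Finally, the quotients defining $U$ and $L$ need care. Since $y'_n<0$, a value $f(x,y')=-\infty$ gives $+\infty$ inside the infimum defining $L$, which is fine for $L$. Likewise $-\infty$ inside the supremum defining $U$ is fine. The inequality $U\le L$ is derived pointwise from the same rearrangement, which remains valid in extended arithmetic because all coefficients are positive. The bounds $U>-\infty$ and $L<\infty$ might fail if $f\equiv-\infty$ on a half-space. In that case I would replace $U$ by $\max(U,0)$-type truncations, or simply take $\max(U,\min(L,0))$ as the lower function and apply the extended-valued sandwich lemma remark (sandwich with $[-\infty,\infty]$ values), then clip to obtain a real Borel $B_n$ between $U$ and $L$. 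Applying this generalised Theorem \ref{thm:main} to $\tilde f$ and restricting to $y\in\Y$ gives the corollary.
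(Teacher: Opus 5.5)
There is a genuine gap in your last step, where you claim the proof of Theorem~\ref{thm:main} can be rerun with $f$ taking the value $-\infty$. Your checks of the base case, of $g$, $h$, $F$, and of the inequality $U\le L$ are fine. The problem is which infinite values of $U$ and $L$ can occur. The cases $U=-\infty$ and $L=+\infty$, the ones you flag, are harmless. The fatal case is $U=+\infty$, which forces $L=+\infty$, or symmetrically $L=-\infty$.

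This can happen when $f(x,\cdot)\equiv-\infty$ on one open half-space. Then $g(x,\cdot)\equiv-\infty$, hence $h(x,\cdot)\equiv-\infty$, and the inductive hypothesis only sees $f$ on $\R^n_0$. Nothing then forces the returned $B_1,\dots,B_{n-1},C$ to be compatible with the values of $f$ near the hyperplane on the other side.

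Concretely, take $n=1$, $\Y=[0,\infty)$, $f(x,0)=0$ and $f(x,y)=2$ for $y>0$, so that $f\le 0\cdot y+2$. Your extension is $-\infty$ on $(-\infty,0)$, so $h\equiv-\infty$ and $F(x,0)=0$. The base case may legitimately return $C\equiv1$ (take $B_n=\X$ in Lemma~\ref{lemma:novikov}). Then $U(x)=\sup_{y>0}(2-1)/y=+\infty=L(x)$, and indeed no real $B_1$ satisfies $2\le B_1y+1$ for all $y>0$. No truncation or clipping can produce a real number between $+\infty$ and $+\infty$. Moreover, $\max(U,\min(L,0))$ is not \usa in general, since $\min(L,0)$ is only \lsa, so Lemma~\ref{lemma:sandwich} would not apply to it anyway.

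The missing idea is that you do not need to touch the proof of Theorem~\ref{thm:main} at all. You correctly saw that the $-\infty$ extension $g$ of $f$ is \usa on $\X\times\R^n$, while the $-\|y\|^2$ extension off $\Y$ is not. Combine them pointwise instead: $\tilde f(x,y)=\max\bigl(g(x,y),-\|y\|^2\bigr)$.

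This $\tilde f$ is \usa as the maximum of two \usa functions, and it is real-valued everywhere. It satisfies $\tilde f\ge f$ on $\X\times\Y$. By the same computation as before, $\tilde f\le b\cdot y+\tilde c$ on $\X\times\R^n$ with $\tilde c=\max(c,\|b\|^2/4)$. Theorem~\ref{thm:main} then applies directly to $\tilde f$, and this is exactly the paper's proof.

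The $[-\infty,\infty)$-valued version of the theorem that you wanted is in fact true. But it holds because of this reduction, not by rerunning the induction.
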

\begin{proof}We extend $f$ to $\X \times \R^n$ by defining $g: \X \times \R^n \to [-\infty, \infty)$ as $g(x,y) = f(x,y)$ if $y\in\Y$ and $g(x,y)=-\infty$ if $y\notin\Y$. For any $\gamma \in \R$, the superlevel set $\{(x,y) \in \X \times \R^n : g(x,y) > \gamma\}$ is precisely $\{(x,y) \in \X \times \Y : f(x,y) > \gamma\}$. Since $f$ is \usa in the extended sense, this set is analytic in $\X \times \R^n$, and hence $g$ is \usa. To obtain a function valued in $\R$, we define $\tilde{f}: \X \times \R^n \to \R$ as $\tilde{f}(x,y) = \max(g(x,y), -\|y\|^2)$. As the maximum of two \usa functions, $\tilde{f}$ is \usa. Proceeding as in the beginning of the proof of \Cref{thm:main}, one easily shows that $\tilde{f}(x,y) \le b(x)\cdot y + \tilde{c}(x)$ for all $x \in \X$ and $y \in \R^n$, where $\tilde{c}(x) = \max(c(x), \|b(x)\|^2/4)$. Applying \Cref{thm:main} to $\tilde{f}$ yields Borel  maps $B: \X \to \R^n$ and $C: \X \to \R$ such that $\tilde{f}(x,y) \leq B(x)\cdot y + C(x)$, for all $x \in \X$ and $y \in \R^n$. Since $f(x,y) = g(x,y) \leq \tilde{f}(x,y)$ on $\Y$, we conclude.\end{proof}

\section{Linear functional selection}
We can leverage the extended formulation on analytic sets of \Cref{cor:analytic} to deduce a selection theorem for  linear functionals.

\begin{corollary}\label{cor:linfunc}
Let $\X$ be a standard Borel space and $\Y\subseteq \mathbb R^n$ a Borel set. Let $f:\X\times \Y\to\R$ be \usa. Assume there exists a map $a:\X\to\R^n$ such that, for all $x\in\X$ and $y\in\Y$, $f(x,y)\le a(x)\cdot y$. Then, there exists a Borel  map $A:\X\to\mathbb R^n$, satisfying $f(x,y)\leq A(x)\cdot y$ for all $x\in\X$ and $y\in\Y$.
\end{corollary}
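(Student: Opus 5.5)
The plan is to reduce the linear case to the affine case by a homogenisation (cone) trick, using \Cref{cor:analytic} on a suitably enlarged analytic domain. The key observation is that if $f(x,y)\le a(x)\cdot y$, then for every $t>0$ we also have $t f(x,y)\le a(x)\cdot(ty)$. So the rescaled function $(x,ty)\mapsto t f(x,y)$ on the cone $\{ty: t>0,\ y\in\Y\}$ is still dominated by the same linear functional. An affine bound valid on a whole cone is forced to be linear in the relevant sense. Concretely, let $\tilde\Y=\{ty: t>0, y\in\Y\}\subseteq\R^n$. This is analytic, being the image of the Borel set $(0,\infty)\times\Y$ under a continuous map, though it need not be Borel; this is exactly why \Cref{cor:analytic} is needed.

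On $\X\times\tilde\Y$ define
\[
\tilde f(x,z)=\sup\{t f(x,y): t>0,\ y\in\Y,\ ty=z\}.
\]
This supremum is finite and bounded by $a(x)\cdot z$. It is \usa in the extended sense because
\[
\{(x,z):\tilde f(x,z)>\gamma\}
\]
is the projection onto $\X\times\R^n$ of the Borel-in-parameters analytic set $\{(x,z,t,y): ty=z,\ t f(x,y)>\gamma\}$. Here $(x,t,y)\mapsto t f(x,y)$ is \usa on $\X\times(0,\infty)\times\Y$, since $\{tf>\gamma\}=\bigcup_{q\in\mathbb{Q}_{>0}}\{f>\gamma/q\}\cap\{t\ldots\}$ can be handled via rational approximation, or directly as a product of a positive Borel function and a \usa one. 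Moreover $\tilde f\ge f$ on $\Y$ (take $t=1$), and $\tilde f(x,sz)=s\tilde f(x,z)$ for $s>0$. Applying \Cref{cor:analytic} with $b=a$, $c=0$ yields Borel $B,C$ with $\tilde f(x,z)\le B(x)\cdot z+C(x)$ on $\tilde\Y$.

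Next I remove $C$ using positive homogeneity. For $z\in\tilde\Y$ and $s>0$, $s\tilde f(x,z)\le sB(x)\cdot z+C(x)$; dividing by $s$ and letting $s\to\infty$ gives $\tilde f(x,z)\le B(x)\cdot z$. No measurability issue arises here, since it is a pointwise inequality with the same Borel $B$. Hence $f(x,y)\le\tilde f(x,y)\le B(x)\cdot y$ for $y\in\Y$, and $A=B$ works.

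The main obstacle is verifying the extended \usa property of $\tilde f$. This requires checking that $(x,t,y)\mapsto tf(x,y)$ has analytic strict superlevel sets. For $\gamma\ge0$, write $\{tf>\gamma\}=\bigcup_{q\in\mathbb{Q}_{>0}}\{t>q\}\times\{f>\gamma/q\}$; the case $\gamma<0$ is analogous, using that $t>0$ with rational sandwiches. Then the projection argument above applies. Everything else is routine.
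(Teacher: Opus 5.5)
Your proposal is correct and follows essentially the same route as the paper. Both arguments take the supremum of $tf(x,y)$ over $ty=z$ on the analytic cone generated by $\Y$, apply \Cref{cor:analytic} with $b=a$, $c=0$, and use positive homogeneity to send the constant term to zero; your explicit rational-approximation check that $(x,t,y)\mapsto tf(x,y)$ is \usa fills in a step the paper only describes as easily checked.
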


\begin{proof}
We define the set
    $$\Gamma = \bigl\{(z, y, t) \in \R^n \times \Y \times (0,\infty) \,:\, z = ty\bigr\}\,.$$
Let $g: \X \times \R^n \times \Y \times (0,\infty) \to [-\infty, \infty)$ be given by
$$g(x, z, y, t) = \begin{cases}
    tf(x,y) & \text{if } (z, y, t) \in \Gamma; \\
    -\infty & \text{otherwise.}
\end{cases}$$
Using that $\Gamma$ is a Borel set, $f$ is \usa, and $t > 0$,  it is easily checked that $g$ is \usa.
Moreover, for any $(x, z, y, t) \in \X \times \R^n \times \Y \times (0,\infty)$, we have $$g(x, z, y, t) \leq ta(x)\cdot y = a(x)\cdot z\,,$$
as the left side is finite only if $(z, y, t) \in \Gamma$, (i.e., for $z = ty$). Since this upper bound is finite, we can define $h: \X \times \R^n \to [-\infty, \infty)$ as
$$h(x,z) = \sup_{(y,t) \in \Y \times (0,\infty)} g(x, z, y, t)\,,$$
which is \usa (as the supremum of a \usa function over an analytic domain) and satisfies $h(x,z) \leq a(x)\cdot z$ for all $(x,z) \in \X \times \R^n$. 

We denote by $\mathcal C$ the projection of $\Gamma$ on $\R^n$, which is also the cone generated by $\Y$. Since $\Gamma$ is Borel, $\mathcal C$ is analytic. We note that for each $z \in \mathcal{C}$ there is at least one pair $(y,t)\in\Y\times(0,\infty)$ such that $(z,y,t)\in\Gamma$, and so $g(x,z,y, t)>-\infty$. In particular, $h$ is real-valued on $\X \times \mathcal{C}$. Moreover, for every $x\in\X$ and $z\in\mathcal C$, we have already shown that $h(x,z)\le a(x)\cdot z$. Therefore, $h$ satisfies the hypotheses of \Cref{cor:analytic}, on the analytic domain $\X\times\mathcal C$. We conclude that there exist Borel measurable maps $B:\X\to\R^n$ and $C:\X\to\R$, such that
$$h(x,z)\le B(x)\cdot z + C(x)\,,$$
for every $x\in\X$ and $z\in\mathcal C$.

We now show that $h$ is positively homogeneous on $\X\times\mathcal C$. Indeed, for any $x\in\X$, $z\in\mathcal C$, and $\lambda>0$, we have
$$h(x,\lambda z) = \sup_{\substack{y\in\Y,\,t>0\\ \lambda z = ty}} tf(x,y)
= \sup_{\substack{y\in\Y,\,t>0\\z = ty}} \big(\lambda tf(x,y)\big) = \lambda h(x,z)\,.$$
In particular, we can fix any $y\in\Y$, and since $\lambda y\in\mathcal C$ we get
$$h(x,y)=h(x,\lambda y)/\lambda\leq B(x)\cdot y + C(x)/\lambda\,.$$
As this must hold for every $\lambda>0$, we can let $\lambda\to\infty$ and we obtain
$$h(x,y)\le B(x)\cdot y\,.$$

Finally, let us fix any $x\in\X$ and $y\in\Y$. Since $(y,y,1)\in\Gamma $, we have that
$$f(x,y) = g(x,y,y,1) \leq h(x,y) \leq B(x)\cdot y\,.$$
Setting $A = B$ concludes the proof.
\end{proof}
As we did for \Cref{cor:analytic}, the assumption that $\Y$ is a Borel set in Corollary \ref{cor:linfunc} can be relaxed to mere analyticity (where upper semi-analyticity is understood in the extended sense as in \Cref{cor:analytic}). The proof remains essentially identical: the generated cone $\mathcal{C}$ is still analytic, and one  invokes the analytic extension (Corollary \ref{cor:analytic}) in place of Theorem \ref{thm:main} to obtain the intermediate affine bound.

We also remark that the linear framework of \Cref{cor:linfunc}   extends to arbitrary Borel measurable transformations of the domain, as shown in the next corollary. Notably, this result recovers \Cref{thm:main} as a special case.
\begin{corollary}\label{cor:feature}
    Let $\X$ and $\Y$ be standard Borel spaces. Let $f:\X\times\Y\to\R$ be \usa and $\phi:\Y\to\R^n$  Borel. Assume there exists a map $a:\X\to\R^n$ such that, for all $x\in\X$ and $y\in\Y$, $f(x,y)\le a(x)\cdot \phi(y)$. Then, there exists a Borel  map $A:\X\to\mathbb R^n$, satisfying $f(x,y)\leq A(x)\cdot \phi(y)$ for all $x\in\X$ and $y\in\Y$.
\end{corollary}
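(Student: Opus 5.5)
The plan is to push $f$ forward through $\phi$ and reduce to the analytic-domain version of \Cref{cor:linfunc}. Set $\mathcal Z=\phi(\Y)\subseteq\R^n$. This is analytic, since it is the Borel image of a standard Borel space. Define
$$\tilde f(x,z)=\sup_{y\in\Y,\ \phi(y)=z} f(x,y)\,,\qquad (x,z)\in\X\times\mathcal Z\,.$$
To see that $\tilde f$ is \usa in the extended sense, first set $g(x,z,y)=f(x,y)$ when $\phi(y)=z$ and $g(x,z,y)=-\infty$ otherwise. The graph of $\phi$ is Borel, so $g$ is \usa on $\X\times\R^n\times\Y$. Then, for each $\gamma$, the superlevel set $\{\tilde f>\gamma\}$ is the projection onto $\X\times\R^n$ of the analytic set $\{g>\gamma\}$, and is therefore analytic in $\X\times\R^n$.

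Next I will check that $\tilde f$ is real valued on $\X\times\mathcal Z$ and satisfies the hypotheses. For every $z\in\mathcal Z$ the fibre $\phi^{-1}(z)$ is non-empty, so $\tilde f(x,z)>-\infty$. Every $y$ in the fibre gives $f(x,y)\le a(x)\cdot\phi(y)=a(x)\cdot z$, so taking the supremum yields $\tilde f(x,z)\le a(x)\cdot z<\infty$. Hence $\tilde f:\X\times\mathcal Z\to\R$ is \usa in the extended sense and is dominated by the linear map $z\mapsto a(x)\cdot z$.

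Now apply the analytic-domain version of \Cref{cor:linfunc}, as stated in the remark following its proof. This gives a Borel $A:\X\to\R^n$ with $\tilde f(x,z)\le A(x)\cdot z$ for all $z\in\mathcal Z$. Finally, for any $y\in\Y$ we have $f(x,y)\le\tilde f(x,\phi(y))\le A(x)\cdot\phi(y)$, which is the claim.

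The one point needing care is the measurability of the fibrewise supremum over an analytic domain $\mathcal Z$ that is not standard Borel. That is exactly why I pass through the extended definition and the analytic-domain remark rather than through \Cref{cor:linfunc} as stated. If one prefers to avoid relying on that remark, there is an alternative: extend $\tilde f$ by $-\infty$ off $\mathcal Z$ and redo the proof of \Cref{cor:linfunc} with $\Gamma=\{(w,y,t):w=t\phi(y)\}$. This needs only \Cref{cor:analytic}, on the analytic cone generated by $\mathcal Z$.

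To recover \Cref{thm:main}, take $\phi(y)=(y,1)$ and $a=(b,c)$.
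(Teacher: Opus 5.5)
Your proposal is correct and follows essentially the same route as the paper. You push $f$ forward through $\phi$ to the fibrewise supremum on the analytic image $\mathcal Z=\phi(\Y)$, using the auxiliary function equal to $f$ on the graph of $\phi$ and $-\infty$ elsewhere. You then apply the analytic-domain version of \Cref{cor:linfunc} and pull the bound back via $f(x,y)\le\tilde f(x,\phi(y))\le A(x)\cdot\phi(y)$.
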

\begin{proof}
     We define $h:\X\times\Y\times\R^n\to[-\infty,\infty)$ as
    $$h(x,y,z) = \begin{cases}f(x,y)&\text{if $z=\phi(y)$;}\\-\infty&\text{otherwise.}\end{cases}$$
    For any $\gamma\in\R$, the set $S = \{(x,y,z) \in \X\times\Y\times\R^n\,:\,h(x,y,z)>\gamma\}$ is analytic. Indeed, it can be written as $S = S_1\cap S_2$, where
    $$S_1 = \{(x,y)\in \X\times\Y\,:\,f(x,y)>\gamma\}\times\R^n$$ is analytic since $f$ is \usa, and 
    $$S_2 = \X\times \{(y,z)\in\Y\times\R^n\,:\,z=\phi(y)\}$$ is Borel, as the graph of a Borel map is a Borel set (\cite{kechris1995classical}, Theorem 14.12). In particular, $h$ is \usa. 
    
    Let $\mathcal Z=\phi(\Y)$, the image of $\phi$, which is an analytic set in $\R^n$ (see, e.g., Proposition 14.4 in \cite{kechris1995classical}). We define $g:\X\times\R^n\to[-\infty, \infty)$ as 
    $$g(x,z) = \sup_{y\in\Y}h(x,y,z)\,,$$
    where we note that $g(x,z)\leq a(x)\cdot z<\infty$ for all $(x,z)\in\X\times\mathcal Z$. $g$ is \usa, as the supremum of an \usa function on an analytic domain. In particular, its restriction to $\X\times\mathcal Z$ is \usa in the extended sense of \Cref{cor:analytic}. Moreover, when restricted to $\X\times\mathcal Z$, $g$ takes finite values. In particular, applying the analytic extension of \Cref{cor:linfunc} to $g$ on the domain $\X\times\mathcal Z$, we obtain a Borel measurable map $A:\X\to\R^n$ such that $g(x,z) \le A(x)\cdot z$ for all $(x,z) \in \X\times\mathcal Z$. We thus conclude that $$f(x,y)\leq g(x,\phi(y))\leq A(x)\cdot\phi(y)\,,$$ for every $x\in\X$ and $y\in\Y$. 
    \end{proof}

\section{Subgradient selection}

A basic theme in convex and variational analysis is the measurable dependence of subdifferentials on parameters. This question is central in stochastic optimisation, optimal control, and mathematical finance, where one often needs to choose supporting hyperplanes or subgradients in a measurable way. The classical framework for such problems is the theory of \emph{normal integrands} (see, e.g., \cite{rockafellar1976integrands} and  \cite{castaing1977convex}). In that setting, one assumes lower semicontinuity of the sections together with suitable epigraphical measurability, which ensures  the propagation of measurability through the basic operations of convex analysis. In particular, \cite{hess1995measurability} proved that, for a proper normal integrand, the conjugate integrand is again normal, and the subdifferential multifunction along a measurable section is Effros measurable. More recent developments in stochastic convex analysis continue to build on this perspective; see, e.g., \cite{kiiski2017optional,pennanen2024convex}, and \cite{bui2024interchange}.

However, natural operations such as projection or marginalisation can lead outside the normal-integrand framework. Even when the original setting is well behaved, the resulting functions need not preserve lower semicontinuity or Borel regularity in a form accessible to the classical theory, and are often more naturally described within the semi-analytic setting.

In finite dimensions, the hyperplane selection theorem yields, through  \Cref{cor:linfunc}, a different approach to measurable subgradient selection. Without focusing on normal-integrand structure or prior measurability of the subdifferential multifunction, one can directly obtain a Borel measurable subgradient selector in semi-analytic settings that fall outside the scope of classical theory, as exemplified by the following corollary.

\begin{corollary}\label{cor:subgradient}
Let $\X$ be a standard Borel space, and let $g:\X\times\R^n\to\R$ be \lsa. Assume that, for every $x\in\X$, the section $y\mapsto g(x,y)$ is convex, $g(x,0)=0$, and $\partial_y g(x,0)\neq\varnothing$, where $\partial_y g(x,0)$ denotes the subdifferential at $0$ of the section $y\mapsto g(x,y)$. Then, there exists a Borel measurable map $p:\X\to\R^n$ such that, for every $x\in\X$,
$$p(x)\in\partial_y g(x,0)\,.$$
\end{corollary}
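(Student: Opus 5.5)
The plan is to reduce the statement to \Cref{cor:linfunc} applied to $f=-g$. Recall that $p\in\partial_y g(x,0)$ means $g(x,y)\geq g(x,0)+p\cdot y = p\cdot y$ for every $y\in\R^n$, where we use $g(x,0)=0$. Equivalently,
$$-g(x,y)\leq (-p)\cdot y\qquad\text{for all } y\in\R^n.$$
So the subgradients of the section at $0$ are exactly the negatives of the linear functionals dominating $y\mapsto -g(x,y)$. This is a linear domination problem of the type handled in \Cref{cor:linfunc}.

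Concretely, I would set $f:\X\times\R^n\to\R$, $f(x,y)=-g(x,y)$, and take $\Y=\R^n$, which is Borel.
\begin{itemize}
\item \emph{Regularity.} Since $g$ is \lsa, its superlevel sets satisfy $\{f>\gamma\}=\{g<-\gamma\}$, which is analytic for every $\gamma\in\R$. Hence $f$ is \usa and real valued.
\item \emph{Existence of a dominating map.} Because $\partial_y g(x,0)\neq\varnothing$ for every $x$, we may choose, for each $x$ and with no measurability requirement, some $q(x)\in\partial_y g(x,0)$. Setting $a(x)=-q(x)$ gives $f(x,y)\le a(x)\cdot y$ for all $x\in\X$ and $y\in\R^n$.
\item \emph{Borel selection.} \Cref{cor:linfunc} then provides a Borel map $A:\X\to\R^n$ with $-g(x,y)\leq A(x)\cdot y$ for all $x$ and $y$.
\item \emph{Conclusion.} Define $p=-A$, which is Borel. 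For all $x$ and $y$ we get $g(x,y)\geq p(x)\cdot y = g(x,0)+p(x)\cdot(y-0)$, so $p(x)\in\partial_y g(x,0)$.
\end{itemize}

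Convexity of the sections is only used implicitly. It ensures that the subdifferential is the usual global supporting-hyperplane set, so the characterisation by the global inequality above is exactly the definition being verified.

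I do not expect a real obstacle, since all the descriptive-set-theoretic work is already contained in \Cref{cor:linfunc}. The only points to check are the sign conventions, namely that \lsa for $g$ gives \usa for $-g$, and that the normalisation $g(x,0)=0$ makes the affine bound genuinely linear. That normalisation is why \Cref{cor:linfunc} is used rather than \Cref{thm:main}: with a free constant term $C(x)$ we would only get an approximate supporting hyperplane, not a subgradient.
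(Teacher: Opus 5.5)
Your proposal is correct and follows the paper's proof: apply \Cref{cor:linfunc} to $f=-g$ (which is \usa), take a pointwise and possibly non-measurable choice of subgradient to supply the linear domination hypothesis, and set $p=-A$. Your sign checks and your use of $g(x,0)=0$ to make the bound linear match the paper's argument.
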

\begin{proof}
    For all $x\in\X$, the non-emptiness assumption of the subdifferential $\partial_y g(x,0)$ implies the existence of  $p_x \in \R^n$ such that, $g(x,y) \geq g(x,0) + p_x \cdot y = p_x\cdot y$ (since $g(x,0)=0$ by hypothesis), for all $y \in \R^n$. Applying \Cref{cor:linfunc} to $f = -g$, which is \usa, we find a Borel $A:\X\to\R^n$ such that $f(x,y)\leq A(x)\cdot y$ for all $x\in\X$ and $y\in\R^n$. Setting $p=-A$ concludes the proof. 
\end{proof}
We remark that the assumptions that $g(x,0) = 0$ and that the subdifferential is evaluated at the origin can be readily relaxed. If $y_0:\X\to\R^n$ is a Borel map such that $x \mapsto g(x,y_0(x))$ is Borel measurable, the existence of a Borel  selector $p(x) \in \partial_y g(x, y_0(x))$ follows directly by applying Corollary \ref{cor:subgradient} to the shifted function $\tilde{g}(x,y) = g(x, y + y_0(x)) - g(x,y_0(x))$.

\section{Discussion}

We have proved that, in a finite-dimensional setting, it is possible to obtain a Borel measurable selection of affine functionals dominating an upper semi-analytic function, even though the associated relation is generally only coanalytic and the problem lies outside the direct scope of standard selection theorems. The key insight is that finite-dimensional linear structure compensates for the descriptive-set-theoretic complexity of the problem: a one-dimensional Borel insertion argument, combined with an induction on the dimension, is sufficient to recover a Borel selector. This perspective takes inspiration from a one-dimensional argument in \cite{clerico2025optimality}, where an early version of the analytic sandwich lemma was used to establish a result almost equivalent to the case $n=1$ of \Cref{cor:linfunc}, albeit in an implicit and problem-specific form. By abstracting and building upon this core mechanism, the present paper develops a general finite-dimensional affine selection framework. This provides a natural basis for multivariate extensions of the hypothesis testing results from \cite{clerico2025optimality}, with potential relevance to statistics and decision theory more broadly.

In mathematical finance, our result complements the standard quasi-sure approach (\cite{bouchard2015arbitrage,nutz2016utility}). In that literature, the relevant regularity notion is usually universal measurability, as required for dynamic programming, measurable kernel selection, and quasi-sure statements under non-dominated families of priors. The classical framework therefore mixes different regularity levels: one assumes strong regularity on the input side, yet typically obtains selectors that are only universally measurable, which in the words of \cite{boistard2025robust} means that one \textit{``ha[s] to assume a lot...\ to obtain little''}. To address this mismatch, \cite{carassus2024nonconcave} and \cite{boistard2025robust} propose supplementing ZFC with Projective Determinacy and reformulating the theory in projective terms. Theorem \ref{thm:main} suggests a different, structural way to reduce the same mismatch in finite-dimensional linear problems, without altering the underlying set-theoretic axioms. This suggests that a Borel approach may be worth exploring in finite-dimensional problems, such as one-step superhedging or related supporting-hyperplane constructions, where the literature typically settles for universal measurability.

As a final technical remark, it is worth stressing that the argument underlying the proof of Theorem \ref{thm:main} is essentially finite-dimensional. One might be tempted to extend it directly to spaces of countable algebraic dimension by constructing the Borel dominating function on a basis, one component at a time. Such a strategy would require a consistent procedure, allowing one to enlarge the construction without revising the components already chosen. Instead, each step eliminates one coordinate by encoding the corresponding constraints into a new auxiliary problem in lower dimension. The coefficients in the reduced problem must then be chosen again at each stage, rather than fixed once and kept unchanged through later extensions. This approach is effective in  the finite-dimensional setting, where it can be repeated finitely many times. Whether related Borel selection results remain valid in spaces with a countable algebraic basis, or in more general infinite-dimensional vector spaces, is an open question.

\paragraph{Acknowledgments.}
I am grateful to Sebastian Arnold for insightful and extensive discussions that helped inspire this work. I also thank Riccardo Camerlo for the valuable feedback and suggestions to improve this work. I acknowledge the use of Gemini (Google, Gemini 3.1 Pro) and ChatGPT (OpenAI, GPT-5.4 Thinking) during the preparation of this paper, for polishing and improving the exposition, and for exploring and simplifying possible proof ideas. I revised and edited all AI-generated material, and I take full responsibility for the content and correctness of the paper.
\bibliographystyle{abbrvnat}
\bibliography{bib}

\end{document}